\title{On the Shapes of Bilateral Gamma Densities}
\author{Uwe K{\"u}chler \and Stefan Tappe}
\newif\ifpdf
\numberwithin{equation}{section}
\newtheorem{satz}{Satz}[section]
\newtheorem{theorem}[satz]{Theorem}
\newtheorem{proposition}[satz]{Proposition}
\newcommand{\abs}[1]{\lvert #1 \rvert}
\begin{document}

\maketitle\thispagestyle{empty}

\begin{abstract}
We investigate the four parameter family of bilateral Gamma distributions. The goal of this paper is to provide a thorough treatment of the shapes of their densities, which is of importance for assessing their fitting properties to sets of real data. This includes appropriate representations of the densities, analyzing their smoothness, unimodality and asymptotic behaviour. \bigskip

\textbf{Key Words:} bilateral Gamma distributions, selfdecomposability, unimodality, asymptotic behaviour, density shapes
\end{abstract}

\keywords{60G51, 60E07}

\section{Introduction}

In many fields of applications it is important to find appropriate
classes of distributions for fitting observed data. For this issue,
normal distributions often provide only a poor fit. Specific
examples are given by the logarithmic returns of stock prices,
because their empirical densities typically possess heavier tails
and much higher located modes than normal distributions.

Thus, several authors have looked for other appropriate classes of distributions. We mention the generalized hyperbolic distributions \cite{Barndorff-Nielsen} and their subclasses, which have been applied to finance in \cite{Eberlein-Keller}, the Variance Gamma distributions \cite{Madan} and CGMY-distributions \cite{CGMY}.

Recently, another family of distributions was proposed in
\cite{Kuechler-Tappe}: Bilateral Gamma distributions. In the
mentioned article, bilateral Gamma distributions are fitted to
observed stock prices and compared to other classes of distributions
considered in the literature.

In order to provide a general overview about their fitting properties -- also in view of other applications than finance -- we present a thorough treatment of the shapes of their densities. After recalling the basic properties of bilateral Gamma distributions in Section \ref{sec-basic}, we provide suitable representations of the densities in Section \ref{sec-dens}, which we can use in order to obtain density plots with a computer program. Afterwards, the investigation of the shapes of bilateral Gamma distributions starts: Section \ref{sec-smoothness} concerns the smoothness of the densities, Section \ref{sec-unimodality} the unimodality and Section \ref{sec-asymptotic} is devoted to the asymptotic behaviour of the densities near zero and for $x \rightarrow \pm \infty$. In Section \ref{sec-shapes} we characterize typical shapes of the densities and draw implications concerning the fitting properties of bilateral Gamma distributions.

\section{Bilateral Gamma distributions}\label{sec-basic}

In this section, we define bilateral Gamma distributions and review some of their properties. For details and more informations, we refer to \cite{Kuechler-Tappe}.

A {\em bilateral Gamma distribution} with parameters $\alpha^+,
\lambda^+ ,\alpha^-, \lambda^- > 0$ is defined as the distribution
of $X-Y$, where $X$ and $Y$ are independent, $X \sim
\Gamma(\alpha^+,\lambda^+)$ and $Y \sim \Gamma(\alpha^-,\lambda^-)$.

The characteristic function of a bilateral Gamma distribution is
\begin{align}\label{cf-bilateral}
\varphi(z) = \left( \frac{\lambda^+}{\lambda^+ - iz}
\right)^{\alpha^+} \left( \frac{\lambda^-}{\lambda^- + iz}
\right)^{\alpha^-}, \quad z \in \mathbb{R}
\end{align}
where the powers stem from the main branch of the complex logarithm.

If $X$ is bilateral Gamma distributed with parameters $(\alpha^+,\lambda^+;\alpha^-,\lambda^-)$, then for any $c > 0$ the random variable $cX$ has, by (\ref{cf-bilateral}), again a bilateral Gamma distribution with parameters $(\alpha^+,\frac{\lambda^+}{c};\alpha^-,\frac{\lambda^-}{c})$.

Note that, also by (\ref{cf-bilateral}), the sum of two independent bilateral Gamma random variables with parameters $(\alpha_1^+,\lambda^+;\alpha_1^-,\lambda^-)$ and $(\alpha_2^+,\lambda^+;\alpha_2^-,\lambda^-)$ has again a bilateral Gamma distribution with parameters $(\alpha_1^+ + \alpha_2^+,\lambda^+;\alpha_1^- +
\alpha_2^-,\lambda^-)$. In particular, bilateral Gamma distributions are stable under convolution, and they are \textit{infinitely divisible}. It follows from \cite[Ex. 8.10]{Sato} that both, the drift and the Gaussian part in the L\'evy-Khintchine formula (with truncation function $h = 0$), are equal to zero, and that the L\'evy measure is given by
\begin{align}\label{levy-measure}
F(dx) = \left( \frac{\alpha^+}{x} e^{-\lambda^+ x}
\mathbbm{1}_{(0,\infty)}(x) + \frac{\alpha^-}{|x|} e^{-\lambda^-
|x|} \mathbbm{1}_{(-\infty,0)}(x) \right)dx.
\end{align}
Thus, we can also express the characteristic function $\varphi$ as
\begin{align}\label{char-fkt-self}
\varphi(z) = \exp \left( \int_{\mathbb{R}} \left( e^{izx} - 1 \right) \frac{k(x)}{x} dx \right), \quad z \in \mathbb{R}
\end{align}
where $k : \mathbb{R} \rightarrow \mathbb{R}$ is the function
\begin{align}\label{fkt-k-self}
k(x) = \alpha^+ e^{-\lambda^+ x} \mathbbm{1}_{(0,\infty)}(x) - \alpha^- e^{-\lambda^- |x|} \mathbbm{1}_{(-\infty,0)}(x), \quad x \in \mathbb{R}
\end{align}
which is decreasing on each of $(-\infty,0)$ and $(0,\infty)$.
It is an immediate consequence of \cite[Cor. 15.11]{Sato} that bilateral Gamma distributions are \textit{selfdecomposable}, and hence of class $L$ in the sense of \cite{Sato-Yamazato} and \cite{Sato-Y}.
This is a key property for analyzing their densities, which is exploited in Sections \ref{sec-smoothness}, \ref{sec-unimodality} and \ref{sec-asymptotic}.

Using the characteristic function (\ref{cf-bilateral}), we can specify the following quantities.

\begin{center}
\begin{tabular}{lc}
Mean: & $\frac{\alpha^+}{\lambda^+} - \frac{\alpha^-}{\lambda^-}$, \medskip
\\ Variance: & $\frac{\alpha^+}{(\lambda^+)^2} + \frac{\alpha^-}{(\lambda^-)^2}$, \medskip
\\ Skewness: & $2 \left( \frac{\alpha^+}{(\lambda^+)^3} - \frac{\alpha^-}{(\lambda^-)^3} \right) \Big/ \left(
\frac{\alpha^+}{(\lambda^+)^2} + \frac{\alpha^-}{(\lambda^-)^2} \right)^{3/2}$, \medskip
\\ Kurtosis: & $3 + 6 \left( \frac{\alpha^+}{(\lambda^+)^4} +
\frac{\alpha^-}{(\lambda^-)^4} \right) \Big/ \left(
\frac{\alpha^+}{(\lambda^+)^2} + \frac{\alpha^-}{(\lambda^-)^2} \right)^2$.
\end{tabular}
\end{center}

\section{Representations of the densities}\label{sec-dens}

Bilateral Gamma distributions are absolutely continuous with
respect to the Lebesgue measure, because they are the convolution of two Gamma distributions.
Since the densities satisfy the symmetry relation
\begin{align}\label{symm-rel}
f(x;\alpha^+,\lambda^+,\alpha^-,\lambda^-) = f(-x;\alpha^-,\lambda^-,\alpha^+,\lambda^+), \quad x \in \mathbb{R} \setminus \{ 0 \}
\end{align}
it is sufficient to analyze the density functions on the positive real line.
As the convolution of two Gamma densities, they are for $x \in (0,\infty)$ given by
\begin{align}\label{density-bilateral-conv}
f(x) = \frac{(\lambda^+)^{\alpha^+}
(\lambda^-)^{\alpha^-}}{(\lambda^+ + \lambda^-)^{\alpha^-}
\Gamma(\alpha^+) \Gamma(\alpha^-)} e^{-\lambda^+ x} \int_0^\infty
v^{\alpha^- - 1} \left( x + \frac{v}{\lambda^+ + \lambda^-}
\right)^{\alpha^+ - 1} e^{-v} dv.
\end{align}
We can express the density $f$ by means of the \textit{Whittaker function} $W_{\lambda,\mu}(z)$
\cite[p. 1014]{Gradstein}. According to \cite[p. 1015]{Gradstein}, the Whittaker function has the representation
\begin{align}\label{repr-whittaker}
W_{\lambda,\mu}(z) = \frac{z^{\lambda} e^{-\frac{z}{2}}}{\Gamma(\mu-\lambda+\frac{1}{2})}
\int_0^{\infty} t^{\mu - \lambda - \frac{1}{2}} e^{-t}
\left( 1 + \frac{t}{z} \right)^{\mu + \lambda - \frac{1}{2}} dt
\quad \text{for $\mu - \lambda > -\frac{1}{2}$.}
\end{align}
From (\ref{density-bilateral-conv}) and (\ref{repr-whittaker}) we obtain for $x > 0$
\begin{align}\label{repr-dens-whittaker}
f(x) &= \frac{(\lambda^+)^{\alpha^+} (\lambda^-)^{\alpha^-}}{(\lambda^+ + \lambda^-)^{\frac{1}{2}(\alpha^+ + \alpha^-)}
\Gamma(\alpha^+)} x^{\frac{1}{2}(\alpha^+ + \alpha^-) - 1}
e^{-\frac{x}{2}(\lambda^+ - \lambda^-)}
\\ \notag & \quad \times W_{\frac{1}{2}(\alpha^+ - \alpha^-), \frac{1}{2}(\alpha^+ + \alpha^- - 1)}
(x(\lambda^+ + \lambda^-)).
\end{align}
By \cite[p. 1014]{Gradstein}, we can express the Whittaker function $W_{\lambda,\mu}(z)$ by
the Whittaker functions $M_{\lambda,\mu}(z)$, namely it holds
\begin{align*}
W_{\lambda,\mu}(z) = \frac{\Gamma(-2 \mu)}{\Gamma(\frac{1}{2} - \mu - \lambda)} M_{\lambda,\mu}(z)
+ \frac{\Gamma(2 \mu)}{\Gamma(\frac{1}{2} + \mu - \lambda)} M_{\lambda,-\mu}(z).
\end{align*}
For the Whittaker function $M_{\lambda,\mu}(z)$ the identity
\cite[p. 1014]{Gradstein}
\begin{align*}
M_{\lambda,\mu}(z) &= z^{\mu + \frac{1}{2}} e^{-\frac{z}{2}}
\Phi(\mu - \lambda + {\textstyle \frac{1}{2}}, 2 \mu + 1; z)
\end{align*}
is valid, with $\Phi(\alpha,\gamma;z)$ denoting the
\textit{confluent hypergeometric function} \cite[p. 1013]{Gradstein}
\begin{align}\label{def-confl-hyp-func}
\Phi(\alpha,\gamma;z) = 1 + \frac{\alpha}{\gamma} \frac{z}{1!} + \frac{\alpha (\alpha+1)}{\gamma (\gamma+1)}
\frac{z^2}{2!} + \frac{\alpha(\alpha+1)(\alpha+2)}{\gamma(\gamma+1)(\gamma+2)} \frac{z^3}{3!} + \ldots
\end{align}
Because of the series representation (\ref{def-confl-hyp-func}) of $\Phi(\alpha,\gamma;z)$, we can use
(\ref{repr-dens-whittaker}) in order to obtain density plots with a computer program, which is done in Section \ref{sec-shapes}.

If one of $\alpha^+, \alpha^-$ is an integer, the representation
becomes more convenient at one half of the real axis.

\begin{proposition}\label{prop-alpha-int}
Assume $\alpha^+ \in \mathbb{N} = \{ 1,2,\ldots \}$. Then it holds for each $x \in
(0,\infty)$
\begin{align*}
f(x) =
\frac{(\lambda^+)^{\alpha^+}(\lambda^-)^{\alpha^-}}{(\lambda^+ +
\lambda^-)^{\alpha^-} (\alpha^+ - 1)!} \left( \sum_{k=0}^{\alpha^+
- 1} a_k x^k \right) e^{-\lambda^+ x},
\end{align*}
where the coefficients $(a_k)_{k=0,\ldots,\alpha^+ - 1}$ are
given by
\begin{align*}
a_k = { \alpha^+ - 1 \choose k } \frac{1}{(\lambda^+ +
\lambda^-)^{\alpha^+ - 1 - k}} \prod_{l=0}^{\alpha^+ - 2 - k}
(\alpha^- + l).
\end{align*}
\end{proposition}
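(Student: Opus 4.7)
The plan is to start from the convolution integral representation (\ref{density-bilateral-conv}) of the density, exploit the fact that the exponent $\alpha^+ - 1$ is now a nonnegative integer so that a binomial expansion terminates, and then recognize the resulting integrals as Gamma functions.

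First I would apply the binomial theorem to the factor $(x + v/(\lambda^+ + \lambda^-))^{\alpha^+ - 1}$ inside the integral, obtaining a finite sum indexed by $k = 0, \ldots, \alpha^+ - 1$, with each term carrying a factor $\binom{\alpha^+ - 1}{k} x^k (\lambda^+ + \lambda^-)^{-(\alpha^+ - 1 - k)}$ and a remaining integrand of the form $v^{\alpha^- + \alpha^+ - 2 - k} e^{-v}$. Interchanging sum and integral (justified by nonnegativity or the fact that the sum is finite) reduces each summand to $\Gamma(\alpha^+ + \alpha^- - 1 - k)$.

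Next I would rewrite $\Gamma(\alpha^+ + \alpha^- - 1 - k)/\Gamma(\alpha^-)$ as the rising product $\prod_{l=0}^{\alpha^+ - 2 - k}(\alpha^- + l)$, using the recursion $\Gamma(z+1) = z\Gamma(z)$; the case $k = \alpha^+ - 1$ corresponds to the empty product (equal to $1$), which matches $\Gamma(\alpha^-)/\Gamma(\alpha^-)$ and therefore causes no problem. Substituting back, the $\Gamma(\alpha^-)$ in the denominator of the prefactor in (\ref{density-bilateral-conv}) cancels, and $\Gamma(\alpha^+) = (\alpha^+ - 1)!$ because $\alpha^+$ is an integer. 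Collecting terms then yields exactly the announced expression with the coefficients $a_k$.

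The argument is essentially mechanical and I do not foresee a genuine obstacle; the only point worth a moment's care is handling the boundary index $k = \alpha^+ - 1$ correctly (the empty product convention) and making sure that the powers of $(\lambda^+ + \lambda^-)$ coming from the binomial expansion combine with the $(\lambda^+ + \lambda^-)^{-\alpha^-}$ prefactor to reproduce the stated normalization.
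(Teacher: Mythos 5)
Your proposal is correct and follows exactly the route the paper takes: the paper's proof simply states that, since $\alpha^+$ is an integer, the integral in (\ref{density-bilateral-conv}) is computed via the binomial expansion formula, with the calculations left as obvious. Your write-up fills in precisely those calculations (the termwise Gamma integrals, the cancellation of $\Gamma(\alpha^-)$ via the rising product, and the empty-product case $k = \alpha^+ - 1$), all of which check out.
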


\begin{proof}
Since $\alpha^+$ is an integer, we can compute the integral appearing in (\ref{density-bilateral-conv}) by using the binomial expansion formula. The calculations are obvious.
\end{proof}

The symmetry relation (\ref{symm-rel}) and the identity \cite[p. 1017]{Gradstein}
\begin{align*}
W_{0,\mu}(z) = \sqrt{\frac{z}{\pi}} K_{\mu} \left( \frac{z}{2} \right),
\end{align*}
where $K_{\mu}(z)$ denotes the Bessel function of the third kind, imply that in the case
$\alpha^+ = \alpha^- =: \alpha$ the density (\ref{repr-dens-whittaker}) is of the form
\begin{align}\label{density-bilateral-vg}
f(x) = \frac{1}{\Gamma(\alpha)} \left( \frac{\lambda^+ \lambda^-}{\lambda^+ + \lambda^-} \right)^{\alpha}
|x|^{\alpha - 1} e^{- \frac{x}{2}(\lambda^+ + \lambda^-)} \sqrt{\frac{|x|(\lambda^+ + \lambda^-)}{\pi}}
K_{\alpha - \frac{1}{2}} \left( \frac{|x|}{2} (\lambda^+ + \lambda^-) \right)
\end{align}
for $x \in \mathbb{R} \setminus \{ 0 \}$. The density of a Variance Gamma distribution $VG(\mu,\sigma^2,\nu)$ is, according to \cite[Sec. 6.1.5]{Madan}, given by
\begin{align}\label{density-vg}
h(x) = \frac{2 \exp(\frac{\mu x}{\sigma^2})}{\nu^{1 / \nu} \sqrt{2 \pi} \sigma \Gamma(\frac{1}{\nu})}
\left( \frac{x^2}{\frac{2 \sigma^2}{\nu} + \mu^2} \right)^{\frac{1}{2\nu} - \frac{1}{4}}
K_{\frac{1}{\nu} - \frac{1}{2}} \left( \frac{1}{\sigma^2} \sqrt{x^2 \left( \frac{2 \sigma^2}{\nu} + \mu^2 \right)} \right).
\end{align}
Inserting the parametrization
\begin{align}\label{var-gamma-para}
(\mu,\sigma^2,\nu) := \left( \frac{\alpha}{\lambda^+} -
\frac{\alpha}{\lambda^-}, \frac{2 \alpha} {\lambda^+ \lambda^-},
\frac{1}{\alpha} \right)
\end{align}
into (\ref{density-vg}), we obtain the density (\ref{density-bilateral-vg}), showing that bilateral Gamma distributions with $\alpha^+ = \alpha^- =: \alpha$ are Variance Gamma with parameters given by (\ref{var-gamma-para}).

Conversely, for a bilateral Gamma distribution which is Variance Gamma it necessarily holds $\alpha^+ = \alpha^-$, see \cite[Thm. 3.3]{Kuechler-Tappe}.

\section{Smoothness}\label{sec-smoothness}

As we have pointed out in Section \ref{sec-basic}, bilateral Gamma distributions are selfdecomposable. Therefore, we may use the results of \cite{Sato-Yamazato} and \cite{Sato-Y} in the sequel.

The smoothness of the density depends on the parameters $\alpha^+$
and $\alpha^-$. Let $N \in \mathbb{N}_0 = \{ 0, 1, 2, \ldots \}$ be
the unique nonnegative integer satisfying $N < \alpha^+ + \alpha^-
\leq N+1$.

\begin{theorem}\label{thm-smoothness}
It holds $f \in C^N(\mathbb{R} \setminus \{0\})$ and $f \in C^{N-1}(\mathbb{R}) \setminus C^N(\mathbb{R})$.
\end{theorem}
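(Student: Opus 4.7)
My plan is to split the proof into three parts: smoothness on $\mathbb{R}\setminus\{0\}$, global regularity of order $N-1$, and failure of regularity at the origin. For the first part I would use the representation (\ref{repr-dens-whittaker}). Decomposing $W_{\kappa,\mu}$ as the linear combination of the two $M$-functions recalled in the text, and invoking the entire power series (\ref{def-confl-hyp-func}) of $\Phi$, one sees that $f|_{(0,\infty)}$ is real analytic; by the symmetry (\ref{symm-rel}) the same holds on $(-\infty,0)$. Hence $f\in C^{\infty}(\mathbb{R}\setminus\{0\})\subseteq C^{N}(\mathbb{R}\setminus\{0\})$.

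For the global regularity $f\in C^{N-1}(\mathbb{R})$ I would use Fourier inversion. From (\ref{cf-bilateral}) the characteristic function satisfies
\[
|\varphi(z)|=(\lambda^+)^{\alpha^+}(\lambda^-)^{\alpha^-}\bigl((\lambda^+)^2+z^2\bigr)^{-\alpha^+/2}\bigl((\lambda^-)^2+z^2\bigr)^{-\alpha^-/2},
\]
which decays of order $\alpha^++\alpha^-$ at infinity. Thus $\int_{\mathbb{R}}|z|^{k}|\varphi(z)|\,dz<\infty$ for every integer $k<\alpha^++\alpha^--1$; differentiating $f(x)=\frac{1}{2\pi}\int e^{-izx}\varphi(z)\,dz$ under the integral $k$ times yields $f\in C^{k}(\mathbb{R})$. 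Since $N-1<\alpha^++\alpha^--1$ by hypothesis, we obtain $f\in C^{N-1}(\mathbb{R})$.

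For $f\notin C^{N}(\mathbb{R})$ I would exploit the explicit structure near $0$: inserting the $W$-to-$M$ identity into (\ref{repr-dens-whittaker}) yields, for $x>0$,
\[
f(x)=A_{1}\,x^{\alpha^++\alpha^--1}e^{-\lambda^+ x}\Phi\bigl(\alpha^-,\alpha^++\alpha^-;(\lambda^++\lambda^-)x\bigr)+A_{2}\,e^{-\lambda^+ x}\Phi\bigl(1-\alpha^+,2-\alpha^+-\alpha^-;(\lambda^++\lambda^-)x\bigr),
\]
with explicit constants $A_{1},A_{2}$ gathered from the prefactors of (\ref{repr-dens-whittaker}) and the coefficients of the $W$-to-$M$ identity, together with a mirror expression on $(-\infty,0)$ via (\ref{symm-rel}). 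Since $\Phi(\cdot,\cdot;0)=1$, near the origin $f$ equals an analytic function plus $|x|^{\alpha^++\alpha^--1}$ times an analytic function nonvanishing at $0$; because $\alpha^++\alpha^--1\le N$, this singular contribution is not $C^{N}$ at $0$, so $f\notin C^{N}(\mathbb{R})$. The principal obstacle is the borderline case $\alpha^++\alpha^-=N+1$, where the exponent is an integer and $x^{N}$ is smooth on each half-line separately; here one must compare the two one-sided $N$-th derivatives at $0$ and verify they differ, or equivalently appeal to the general smoothness result for selfdecomposable distributions in \cite{Sato-Yamazato,Sato-Y} applied with the $k$-function values $k(0+)=\alpha^+$, $-k(0-)=\alpha^-$ read off from (\ref{fkt-k-self}).
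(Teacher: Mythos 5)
Your overall strategy is a legitimate alternative to the paper's proof, which consists of a single citation of the general smoothness theorem for class-$L$ (selfdecomposable) distributions, namely Theorem 1.2 of \cite{Sato-Yamazato}, the selfdecomposability having been established in Section \ref{sec-basic}. Your first two steps are sound: analyticity of $f$ on $(0,\infty)$ follows already from (\ref{density-bilateral-conv}) or from the integral representation (\ref{repr-whittaker}) of $W$ by differentiation under the integral sign (and is better obtained that way, since the $W$-to-$M$ decomposition you invoke is itself invalid when $\alpha^++\alpha^-\in\mathbb{N}$); and the Fourier-inversion argument, using $|\varphi(z)|=O(|z|^{-(\alpha^++\alpha^-)})$ together with $N-1<\alpha^++\alpha^--1$, correctly yields $f\in C^{N-1}(\mathbb{R})$.

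The gap is in the third step. The coefficient $A_1$ of the singular term $x^{\alpha^++\alpha^--1}$ is proportional to $\Gamma(1-\alpha^+-\alpha^-)/\Gamma(1-\alpha^+)$, and the factor $1/\Gamma(1-\alpha^+)$ vanishes precisely when $\alpha^+\in\mathbb{N}$; so the assertion that the singular term carries ``an analytic function nonvanishing at $0$'' is false in general --- indeed Theorem \ref{thm-nearzero}(1) records that $f^{(N)}(0+)$ is finite exactly when $\alpha^+\in\mathbb{N}$. This is repairable (when $\alpha^+\in\mathbb{N}$ and $\alpha^++\alpha^-\notin\mathbb{N}$ one has $\alpha^-\notin\mathbb{N}$, so the mirror coefficient on $(-\infty,0)$, proportional to $1/\Gamma(1-\alpha^-)$, is nonzero), but the case distinction must be made. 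More seriously, the borderline case $\alpha^++\alpha^-=N+1\in\mathbb{N}$ is not actually proved: there $\Gamma(-2\mu)=\Gamma(-N)$ is a pole, the $W$-to-$M$ identity degenerates and logarithmic terms appear, so the displayed decomposition of $f$ is unavailable. Your proposed fallback of comparing one-sided $N$-th derivatives is not a comparison of finite quantities when $\alpha^+\notin\mathbb{N}$ (both one-sided limits then diverge, cf. Theorem \ref{thm-nearzero}(3)); when $\alpha^+,\alpha^-\in\mathbb{N}$ the jump can indeed be computed from Proposition \ref{prop-alpha-int}, but that computation is missing. Your other fallback --- citing \cite{Sato-Yamazato} --- is exactly the paper's entire proof, so in the only genuinely delicate case your argument collapses onto the one it was meant to replace.
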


\begin{proof}
This is an immediate consequence of \cite[Thm. 1.2]{Sato-Yamazato}.
\end{proof}

Thus, the $N$-th order
derivative of the density $f$ is not continuous. The only point of
discontinuity is zero. In Section \ref{sec-asymptotic}, we will explore the behaviour of
$f^{(N)}$ near zero.

The densities of bilateral Gamma distributions satisfy the following integro-differential equation.

\begin{proposition}
$f$ satisfies for $x \in \mathbb{R} \setminus \{ 0 \}$
\begin{align*}
x f'(x) &= (\alpha^+ + \alpha^- - 1) f(x) - \alpha^+ \lambda^+
\int_0^{\infty} f(x-u) e^{-\lambda^+ u} du
\\ &\quad - \alpha^- \lambda^- \int_0^{\infty} f(x+u) e^{-\lambda^- u} du.
\end{align*}
\end{proposition}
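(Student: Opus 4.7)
The plan is to first derive the ``baseline'' convolution identity
\begin{align*}
xf(x) = (k \ast f)(x), \quad x \in \mathbb{R},
\end{align*}
which is familiar from the theory of self-decomposable densities, and then to differentiate it once in $x$ and apply integration by parts to shift the derivative onto the exponential kernels.

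To obtain the baseline identity, I would logarithmically differentiate the representation (\ref{char-fkt-self}) in $z$. Since $k$ is bounded near zero and decays exponentially at $\pm\infty$, dominated convergence justifies exchanging $\partial_z$ with the integral, yielding
\begin{align*}
\varphi'(z) = i\, \varphi(z) \int_{\mathbb{R}} e^{izx} k(x)\, dx.
\end{align*}
Comparing this with $\varphi'(z) = i \int_{\mathbb{R}} x f(x) e^{izx}\, dx$, the convolution theorem and Fourier inversion yield $x f = k \ast f$. Inserting the explicit form (\ref{fkt-k-self}) and substituting $u \mapsto -u$ on the negative branch rewrites this as
\begin{align*}
xf(x) = \alpha^+ \int_0^\infty f(x-u) e^{-\lambda^+ u}\, du - \alpha^- \int_0^\infty f(x+u) e^{-\lambda^- u}\, du.
\end{align*}

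Next I would differentiate this identity in $x$. For $x \neq 0$, Theorem \ref{thm-smoothness} supplies the requisite regularity of $f$, and the exponential decay of the kernels $e^{-\lambda^\pm u}$ (with $f$ bounded on suitable windows) justifies bringing $\partial_x$ inside the integrals. To eliminate the derivative on $f$, I would use the identities $f'(x \mp u) = \mp \partial_u f(x \mp u)$ and integrate by parts against the exponential factor. The boundary terms at $u = \infty$ vanish since $f$ decays at $\pm\infty$ (exponentially, by (\ref{density-bilateral-conv})), while the boundary terms at $u = 0$ contribute $\pm f(x)$. Concretely,
\begin{align*}
\int_0^\infty f'(x-u) e^{-\lambda^+ u}\, du = f(x) - \lambda^+ \int_0^\infty f(x-u) e^{-\lambda^+ u}\, du,
\end{align*}
and the analogous identity, with opposite sign and $\lambda^+$ replaced by $\lambda^-$, holds for the integral involving $f(x+u)$.

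Collecting the pieces, the left-hand side of the differentiated baseline identity is $f(x) + x f'(x)$, while the right-hand side gathers into $(\alpha^+ + \alpha^-) f(x)$ plus the two convolution integrals with coefficients $-\alpha^\pm \lambda^\pm$. Rearranging yields the claimed integro-differential equation. The main technical points are the justification of Fourier inversion and of differentiation under the integral; both are routine consequences of the exponential decay of $f$ at $\pm\infty$ and of the $C^1$-regularity on $\mathbb{R} \setminus \{0\}$ granted by Theorem \ref{thm-smoothness} in the parameter regime where the statement is meaningful pointwise.
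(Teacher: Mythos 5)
Your proof is essentially correct, but it takes a genuinely different route from the paper: the paper disposes of this proposition in one line by citing Corollary 2.1 of Sato and Yamazato (1978), which states exactly this integro-differential equation for densities of class $L$ with canonical function $k$. What you have done is reconstruct, in the special case at hand, the argument behind that corollary: the Fourier-analytic derivation of the baseline identity $xf = k \ast f$ from \eqref{char-fkt-self} is the standard one and is sound ($xf$ and $k \ast f$ both lie in $L^1$, the latter is continuous, and uniqueness of the $L^1$ Fourier transform upgrades the a.e.\ identity to a pointwise one away from the origin). The benefit of your route is a self-contained proof; the cost is that you must supply the analytic justifications that the citation hides.

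One step deserves more care than you give it. When you differentiate the baseline identity by ``bringing $\partial_x$ inside the integrals'' and then integrating by parts, the intermediate quantity $\int_0^\infty f'(x-u)\,e^{-\lambda^+ u}\,du$ need not converge absolutely: for $\alpha^+ + \alpha^- \leq 1$ the density has a pole at the origin and $f'$ behaves like a nonintegrable power $|y|^{-\alpha - 1}$ with $\alpha \in (0,1)$ near $y=0$, so for $x>0$ the integrand blows up nonintegrably at $u = x$, and the singular contributions from the two sides carry different constants, so they do not cancel. The repair is easy and avoids $f'$ altogether: substitute $v = x-u$ to write
\begin{align*}
\int_0^\infty f(x-u)\,e^{-\lambda^+ u}\,du = e^{-\lambda^+ x}\int_{-\infty}^{x} f(v)\,e^{\lambda^+ v}\,dv,
\end{align*}
and differentiate by the fundamental theorem of calculus at every continuity point $x \neq 0$ of $f$; this yields precisely your formula $f(x) - \lambda^+ \int_0^\infty f(x-u)e^{-\lambda^+ u}\,du$ (and its analogue for the other integral) using only local integrability of $f$. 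With that substitution your bookkeeping goes through verbatim and gives the claimed equation for all parameter values, not just those for which $f'$ is locally integrable near the origin.
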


\begin{proof}
The assertion follows from Cor. 2.1 of \cite{Sato-Yamazato}.
\end{proof}

\section{Unimodality}\label{sec-unimodality}

Bilateral Gamma distributions are \textit{strictly unimodal}, which is the content of the next result.

\begin{theorem}\label{thm-unimodal}
There exists a point $x_0 \in \mathbb{R}$ such that $f$ is strictly
increasing on $(-\infty,x_0)$ and strictly decreasing on
$(x_0,\infty)$.
\end{theorem}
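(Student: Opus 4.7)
The plan is to combine two inputs: the selfdecomposability of bilateral Gamma distributions, which by Yamazato's classical theorem already delivers weak unimodality, and the real-analyticity of $f$ off the origin, which upgrades weak monotonicity to the strict version demanded by the statement.

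Concretely, I would first invoke the result that every selfdecomposable distribution on $\mathbb{R}$ is unimodal, see \cite{Sato-Yamazato}: this produces a point $x_0\in\mathbb{R}$ such that $f$ is non-decreasing on $(-\infty,x_0)$ and non-increasing on $(x_0,\infty)$. Next I would check that $f$ is real-analytic on each of the open half-lines $(-\infty,0)$ and $(0,\infty)$. On $(0,\infty)$ this is read off from representation \eqref{repr-dens-whittaker}: the prefactor $x^{(\alpha^++\alpha^-)/2-1}e^{-x(\lambda^+-\lambda^-)/2}$ is analytic on $(0,\infty)$, and the Whittaker function $W_{\lambda,\mu}(z)$ is analytic in $z>0$ through its expression by the entire confluent hypergeometric series $\Phi(\alpha,\gamma;z)$ from \eqref{def-confl-hyp-func}. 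Analyticity on $(-\infty,0)$ then follows from the symmetry relation \eqref{symm-rel}.

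With analyticity in hand, the identity theorem rules out any plateau: if $f$ were constant on some subinterval of $(0,\infty)$ it would be constant on all of $(0,\infty)$, contradicting the strict positivity of $f$ on $(0,\infty)$ (evident from \eqref{density-bilateral-conv}) together with integrability; likewise on $(-\infty,0)$. Hence the monotonicity provided by Yamazato's theorem is in fact strict on each open half-line, and the two pieces glue together to produce the required global strict unimodality about $x_0$. The one delicate point I anticipate is the location of the mode $x_0$: when $x_0=0$ the density may blow up in the regime $\alpha^++\alpha^-\le 1$, but since strict monotonicity has been verified separately on each open half-line and $f$ is continuous off $0$ by Theorem \ref{thm-smoothness}, any such behaviour at the origin remains compatible with the statement, and the gluing step goes through.
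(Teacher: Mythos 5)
Your argument is correct, but it takes a genuinely different route from the paper. The paper's proof is a one-line appeal to Theorem 1.4 of \cite{Sato-Yamazato}, which already asserts \emph{strict} unimodality for distribution functions of class $L$ once one checks that neither the bilateral Gamma distribution function nor its reflection is of the degenerate type ${\rm I}_4$ classified there. You instead use only the weaker, classical statement that selfdecomposable laws are unimodal, and then upgrade weak to strict monotonicity by hand via real-analyticity of $f$ on $(0,\infty)$ and $(-\infty,0)$ together with the identity theorem. This buys self-containedness: you do not need the finer classification of \cite{Sato-Yamazato}, only the headline unimodality result plus elementary complex analysis; the price is the extra bookkeeping you correctly identify (ruling out plateaus, gluing across the origin, and the possible blow-up at $x_0=0$ when $\alpha^++\alpha^-\le 1$). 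Two small points to tighten. First, the connection formula expressing $W_{\lambda,\mu}$ through $M_{\lambda,\pm\mu}$ degenerates when $2\mu=\alpha^++\alpha^--1$ is an integer (the coefficients $\Gamma(\mp 2\mu)$ are then undefined), so analyticity is more robustly read off from the convolution representation \eqref{density-bilateral-conv}, which extends holomorphically in $x$ to the right half-plane by differentiation under the integral. Second, Yamazato's unimodality is a statement about convexity/concavity of the distribution function, so one should note that for an absolutely continuous law whose density is continuous off zero (Theorem \ref{thm-smoothness}) this does yield the monotone-density formulation you start from; with that observed, your gluing step across $0$ (strictness on each side plus weak monotonicity bridging the origin) goes through as you describe.
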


\begin{proof}
The existence of the mode $x_0$ is a
direct consequence of \cite[Thm. 1.4]{Sato-Yamazato}, because
neither the distribution function of a bilateral Gamma distribution nor its reflection is of type ${\rm I}_4$ in the sense of \cite{Sato-Yamazato}.
\end{proof}

We emphasize that the \textit{mode} $x_0$ from Theorem \ref{thm-unimodal} can, in general, not be determined explicitly. However, we get the following result, which narrows the location of the mode.

\begin{proposition}\label{prop-unimodal}
If $\alpha^+, \alpha^- \leq 1$, then $x_0 = 0$. Presumed $\alpha^+ > 1$ and $\alpha^- \leq 1$, it holds $x_0 \in (0,
\frac{\alpha^+ - 1}{\lambda^+})$. In the case $\alpha^+, \alpha^- > 1$ we have $x_0 \in (- \frac{\alpha^-
- 1}{\lambda^-}, \frac{\alpha^+ - 1}{\lambda^+})$, and it holds
\begin{align*}
x_0 = 0 \quad &\text{if and only if} \quad \lambda^- \alpha^+ -
\lambda^+ \alpha^- = \lambda^- - \lambda^+,
\\ x_0 > 0 \quad &\text{if and only if} \quad \lambda^- \alpha^+ -
\lambda^+ \alpha^- > \lambda^- - \lambda^+,
\\ x_0 < 0 \quad &\text{if and only if} \quad \lambda^- \alpha^+ -
\lambda^+ \alpha^- < \lambda^- - \lambda^+.
\end{align*}
\end{proposition}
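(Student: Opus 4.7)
My plan is to differentiate the convolution formula (\ref{density-bilateral-conv}) on $x>0$, read off the sign of $f'$, and combine this with the strict unimodality of Theorem~\ref{thm-unimodal} to localize the mode; the analogous information on $x<0$ follows from the symmetry (\ref{symm-rel}). Differentiating under the integral sign in (\ref{density-bilateral-conv}) gives, for $x>0$,
\[
f'(x) = K\, e^{-\lambda^+ x}\int_0^{\infty} w^{\alpha^- - 1}(x+w)^{\alpha^+ - 2}e^{-(\lambda^+ + \lambda^-)w}\bigl[(\alpha^+ - 1) - \lambda^+(x+w)\bigr]\,dw
\]
for some $K>0$. The bracketed factor is $\le 0$ throughout the integration range whenever $\alpha^+\le 1$, and in any case whenever $x\ge(\alpha^+-1)/\lambda^+$; this forces $f'(x)<0$ on the corresponding half-lines. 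Together with the mirror statements for $x<0$, this settles the first claim and produces the bounds $x_0<(\alpha^+-1)/\lambda^+$ and $x_0>-(\alpha^--1)/\lambda^-$ in the remaining claims.

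To obtain the strict inequality $x_0>0$ in case $\alpha^+>1$, $\alpha^-\le 1$, I would pass to $x\to 0^+$ in the formula above; the integrand near $w=0$ behaves like $(\alpha^+-1)w^{\alpha^++\alpha^--3}$, so $f'(0^+)=+\infty$ when $\alpha^++\alpha^-\le 2$, while for $\alpha^++\alpha^->2$ the integral converges and an explicit evaluation via $\int_0^\infty w^{s}e^{-aw}\,dw=\Gamma(s+1)/a^{s+1}$ yields
\[
f'(0)=C\,\Gamma(\alpha^++\alpha^--2)\,\Delta,\qquad \Delta:=\lambda^-\alpha^+-\lambda^+\alpha^--(\lambda^--\lambda^+),
\]
with $C>0$; the decomposition $\Delta=\lambda^-(\alpha^+-1)+\lambda^+(1-\alpha^-)>0$ shows $f'(0^+)>0$, so $f$ increases for small $x>0$ and $x_0>0$. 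When $\alpha^+,\alpha^->1$ the same computation is valid (with $f\in C^1(\mathbb{R})$ by Theorem~\ref{thm-smoothness}), and reads $\operatorname{sgn}f'(0)=\operatorname{sgn}\Delta$; a positive (respectively negative) derivative at the origin immediately places the unique mode to the right (respectively left) of $0$.

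The main obstacle is the remaining implication $\Delta=0\Rightarrow x_0=0$, because $f'(0)=0$ is a priori compatible with a horizontal inflection at the origin and the mode located elsewhere. I would resolve this by a continuity argument: the density $f$ depends continuously on $(\alpha^+,\lambda^+,\alpha^-,\lambda^-)$ uniformly on compacts, and uniqueness of the mode (Theorem~\ref{thm-unimodal}) combined with the a priori bounds already established confines $x_0$ to a compact set, so a standard subsequence extraction shows that $x_0$ is continuous in the parameters. Since $x_0>0$ on the open region $\{\Delta>0\}$ and $x_0<0$ on $\{\Delta<0\}$, continuity across the separating hypersurface $\{\Delta=0\}$ forces $x_0=0$ there. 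An alternative avoiding continuity is an induction on $\lfloor\alpha^+\rfloor$: writing $f'(x)=Ke^{-\lambda^+x}\Psi(x)$, one checks that $\Psi'$ is, up to a positive factor, the derivative of the bilateral Gamma density obtained by replacing $\alpha^+$ with $\alpha^+-1$, whose own $\tilde\Delta=\Delta-\lambda^-$ is strictly negative; the inductive hypothesis (or case~2 when $\alpha^+\le 2$) gives $\tilde x_0<0$, so $\Psi$ is strictly decreasing on $[0,\infty)$ starting from $\Psi(0)=0$, hence $f'(x)<0$ for $x>0$ and, by symmetry, $f'(x)>0$ for $x<0$, yielding $x_0=0$.
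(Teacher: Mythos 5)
Your proof is correct, but it takes a more self-contained route than the paper. For the localization of the mode (the first two claims and the interval in the third), the paper simply invokes structural results of Sato and Yamazato for class $L$ distributions --- Theorem 1.3 (viii), (ix) for the case $\alpha^+,\alpha^-\le 1$, and Theorem 4.1 (ii)--(iv), which bounds the mode of a convolution of unimodal laws in terms of the modes of the factors --- whereas you rederive these facts by differentiating the convolution integral (\ref{density-bilateral-conv}) and reading off the sign of the bracket $(\alpha^+-1)-\lambda^+(x+w)$; this is elementary and equally valid, at the cost of justifying differentiation under the integral sign. The sign criterion at the origin is obtained in both treatments by the same computation: the paper's $f'(0)$ contains the factor $1-\tfrac{\lambda^+}{\lambda^++\lambda^-}\cdot\tfrac{\alpha^++\alpha^--2}{\alpha^+-1}$, which is your $\Delta$ divided by the positive quantity $(\alpha^+-1)(\lambda^++\lambda^-)$. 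Where you go beyond the paper is the implication $\Delta=0\Rightarrow x_0=0$: the paper dismisses this with ``which yields the remaining statement,'' although strict unimodality together with $f'(0)=0$ does not by itself exclude a mode away from the origin, so you are right to flag it. Both of your repairs work: the parameter-continuity argument needs only the (routine) locally uniform convergence of the densities in the parameters together with uniqueness of the maximizer from Theorem \ref{thm-unimodal}; and the reduction of $\Psi'$ to the derivative of the density with $\alpha^+$ replaced by $\alpha^+-1$ checks out, since a short computation gives $\Psi'(x)=(\alpha^+-1)(x+\cdot)$-type cancellation leading to $\Psi'(x)=c\,e^{\lambda^+x}\tilde f'(x)$ with $c>0$ and $\tilde\Delta=\Delta-\lambda^-<0$, so $\Psi$ is nonincreasing from $\Psi(0)=0$, whence $f'\le 0$ on $(0,\infty)$ and, by the mirror argument, $x_0=0$. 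In this one respect your write-up is more complete than the paper's.
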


\begin{proof}
The first statement is a consequence of parts (viii) and (ix) of \cite[Thm.
1.3]{Sato-Yamazato}.

Since the mode of a $\Gamma(\alpha,\lambda)$-distribution with
$\alpha > 1$ is given by $\frac{\alpha - 1}{\lambda}$, parts (ii)
and (iii) of \cite[Thm. 4.1]{Sato-Yamazato} yield the second assertion.

In the case $\alpha^+, \alpha^- > 1$, part (iv) of \cite[Thm. 4.1]{Sato-Yamazato} shows that $x_0 \in (- \frac{\alpha^- - 1}{\lambda^-}, \frac{\alpha^+ - 1}{\lambda^+})$. According to
Theorem \ref{thm-smoothness}, the density $f$ is continuously
differentiable. Using the representation (\ref{density-bilateral-conv}) and Lebesgue's
dominated convergence theorem, we obtain the first derivative for
$x \in (0,\infty)$
\begin{align*}
f'(x) &= \frac{(\lambda^+)^{\alpha^+}
(\lambda^-)^{\alpha^-}}{(\lambda^+ + \lambda^-)^{\alpha^-}
\Gamma(\alpha^+) \Gamma(\alpha^-)} \bigg[ -\lambda^+ e^{-\lambda^+
x} \int_0^\infty v^{\alpha^- - 1} \left( x + \frac{v}{\lambda^+ +
\lambda^-} \right)^{\alpha^+ - 1} e^{-v} dv
\\ &\quad \quad +(\alpha^+ - 1) e^{-\lambda^+ x}
\int_0^\infty v^{\alpha^- - 1} \left( x + \frac{v}{\lambda^+ +
\lambda^-} \right)^{\alpha^+ - 2} e^{-v} dv \bigg].
\end{align*}
Applying Lebesgue's dominated convergence theorem again, by the
continuity of $f'$ and the fact $\Gamma(x+1) = x \Gamma(x)$, $x
> 0$ we get
\begin{align*}
f'(0) = \frac{(\lambda^+)^{\alpha^+}
(\lambda^-)^{\alpha^-}}{(\lambda^+ + \lambda^-)^{\alpha^+ +
\alpha^- - 2}} \frac{\Gamma(\alpha^+ + \alpha^- -
2)}{\Gamma(\alpha^+ - 1) \Gamma(\alpha^-)} \left[ 1 -
\frac{\lambda^+}{\lambda^+ + \lambda^-} \cdot \frac{\alpha^+ + \alpha^- - 2}{\alpha^+ -
1} \right],
\end{align*}
which yields the remaining statement of the proposition.
\end{proof}

A particular consequence of Proposition \ref{prop-unimodal} is that for $\lambda^+ \gg \alpha^+$ and $\lambda^- \gg \alpha^-$ the mode $x_0$ is necessarily close to zero.

\section{Asymptotic behaviour}\label{sec-asymptotic}

We have seen in Section \ref{sec-smoothness} that for $N := \lceil \alpha^+ + \alpha^-\rceil - 1$ the $N$-th order
derivative of the density $f$ is not continuous. The only point of
discontinuity is zero. We will now explore the behaviour of
$f^{(N)}$ near zero. For the proof of the upcoming result, Theorem \ref{thm-nearzero}, we need the following properties of the \textit{Exponential Integral} \cite[Chap. 5]{Abramowitz}
\begin{align*}
E_1(x) := \int_1^{\infty} \frac{e^{-xt}}{t} dt, \quad x > 0.
\end{align*}
The Exponential Integral has the series expansion
\begin{align}\label{series-ei}
E_1(x) = - \gamma - \ln x - \sum_{n=1}^{\infty} \frac{(-1)^n}{n \cdot n!} x^n,
\end{align}
where $\gamma$ denotes Euler's constant
\begin{align*}
\gamma = \lim_{n \rightarrow \infty} \left[ \sum_{k=1}^n
\frac{1}{k} - \ln(n) \right].
\end{align*}
The derivative of the Exponential Integral is given by
\begin{align}\label{der-exp-int}
\frac{\partial}{\partial x}E_1(x) = - \frac{e^{-x}}{x}.
\end{align}
Due to symmetry relation (\ref{symm-rel}) it is, concerning the behaviour of
$f^{(N)}$ near zero, sufficient to treat the case $x \downarrow 0$.

\begin{theorem}\label{thm-nearzero}
Let $N := \lceil \alpha^+ + \alpha^- \rceil - 1$.
\begin{enumerate}
\item $\lim_{x \downarrow 0} f^{(N)}(x)$ is finite if and only if $\alpha^+ \in \mathbb{N} = \{ 1,2,\ldots \}$.

\item If $\alpha^+ \notin \mathbb{N}$ and $\alpha^+ + \alpha^- \notin \mathbb{N}$, then $f^{(N)}(x) \sim
\frac{C_1}{x^{\alpha}}$ as $x \downarrow 0$ for constants $C_1 \neq 0, \alpha \in (0,1)$.

\item Let $\alpha^+ \notin \mathbb{N}$ be such that $\alpha^+ + \alpha^- \in \mathbb{N}$. Then
$f^{(N)}(x) \sim M(x)$ as $x \rightarrow 0$, where
$M$ is a slowly varying function as $x \rightarrow 0$ satisfying $\lim_{x \rightarrow 0} M(x) = \infty$. Moreover, it holds
$\lim_{x \downarrow 0} ( f^{(N)}(x) - f^{(N)}(-x) ) = C_2 \in \mathbb{R}$.
\end{enumerate}
\end{theorem}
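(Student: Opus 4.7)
My starting point would be to rewrite (\ref{density-bilateral-conv}) in the form
\[
f(x) = \frac{(\lambda^+)^{\alpha^+}(\lambda^-)^{\alpha^-}}{\Gamma(\alpha^+)\Gamma(\alpha^-)}\, e^{-\lambda^+ x} \int_0^\infty (x+v)^{\alpha^+-1} v^{\alpha^--1} e^{-\mu v}\, dv, \quad x > 0,
\]
with $\mu := \lambda^+ + \lambda^-$ (obtained from (\ref{density-bilateral-conv}) by the rescaling $v \mapsto \mu v$). Differentiating $N$ times under the integral sign --- legitimate for $x > 0$ by dominated convergence, the factor $e^{-\mu v}$ absorbing polynomial growth --- and applying Leibniz to the $e^{-\lambda^+ x}$ and $(x+v)^{\alpha^+-1}$ pieces represents $f^{(N)}(x)$ as a linear combination of terms proportional to $P_j I_j(x)$, where
\[
P_j := \prod_{i=0}^{j-1}(\alpha^+-1-i), \quad I_j(x) := \int_0^\infty (x+v)^{\alpha^+-1-j} v^{\alpha^--1} e^{-\mu v}\, dv, \quad j = 0, \ldots, N.
\]
For $j \leq N-1$ the exponent $\alpha^++\alpha^--2-j$ is strictly greater than $-1$, so $I_j$ extends continuously to $x = 0$; hence the entire singular behaviour of $f^{(N)}$ at $0^+$ is carried by the $j = N$ term. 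Since $\alpha^- > 0$ and $\alpha^++\alpha^- \leq N+1$ force $\alpha^+ < N+1$, the prefactor $P_N = (\alpha^+-1)\cdots(\alpha^+-N)$ vanishes iff $\alpha^+ \in \{1,\ldots,N\}$, i.e., iff $\alpha^+ \in \mathbb{N}$. This settles part~(i).

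Assuming henceforth $\alpha^+ \notin \mathbb{N}$, set $p := \alpha^+-1-N \in (-1,0)$ and $q := \alpha^- - 1 > -1$. To analyse $I_N(x) = \int_0^\infty(x+v)^p v^q e^{-\mu v}\,dv$ as $x \downarrow 0$, my plan is to split at $v = 1$ (the tail $\int_1^\infty$ is uniformly bounded in $x$), to control the error from $e^{-\mu v} \neq 1$ on $[0,1]$ via $|e^{-\mu v}-1| \leq \mu v$, and to reduce the principal piece through the substitution $v = xu$ to
\[
\int_0^1 (x+v)^p v^q\, dv = x^{p+q+1} \int_0^{1/x} (1+u)^p u^q\, du.
\]
In Sub-case~(ii), $p + q = \alpha^+ + \alpha^- - 2 - N \in (-2,-1)$, so $(1+u)^p u^q$ is integrable on $(0,\infty)$ with value $B(\alpha^-,\, N+1-\alpha^+-\alpha^-) > 0$; setting $\alpha := N+1-\alpha^+-\alpha^- \in (0,1)$ this gives $f^{(N)}(x) \sim C_1 / x^\alpha$ with $C_1 \neq 0$, proving~(ii). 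In Sub-case~(iii), $p + q = -1$, so $(1+u)^p u^q \sim 1/u$ at infinity and $\int_0^{1/x}(1+u)^p u^q\, du = -\log x + O(1)$; hence $f^{(N)}(x) = K^+(-\log x) + O(1)$ with $K^+$ a nonzero multiple of $P_N$, and $M(x) := K^+(-\log x)$ is slowly varying at $0$ with $|M(x)| \to \infty$.

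For the finite-limit assertion in~(iii) I would exploit the symmetry: differentiating (\ref{symm-rel}) $N$ times gives $f^{(N)}(-x;\alpha^+,\lambda^+,\alpha^-,\lambda^-) = (-1)^N f^{(N)}(x;\alpha^-,\lambda^-,\alpha^+,\lambda^+)$, and applying the Sub-case~(iii) analysis with the $\pm$-parameters swapped produces $f^{(N)}(-x) = (-1)^N K_s^+(-\log x) + O(1)$. The logarithmic terms in $f^{(N)}(x) - f^{(N)}(-x)$ therefore cancel iff $P_N = (-1)^N \prod_{i=1}^N (\alpha^- - i)$; using $\alpha^- = N + 1 - \alpha^+$ and the reindexing $i \mapsto N+1-i$ transforms the right-hand side into $P_N$, so cancellation is automatic. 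The main obstacle will be sharpening the two $O(1)$ remainders to explicit constants whose difference equals $C_2$; my plan is to decompose $(1+u)^p u^q = 1/u + r(u)$ with $r$ integrable on $[1,\infty)$ so that each piece integrates to an expression of the form $-\log x + \text{const} + o(1)$, extract the constant part of each side, and verify --- again via the symmetry --- that the unbounded contributions match up.
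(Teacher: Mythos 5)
Your proposal is essentially correct, but it takes a genuinely different route from the paper. The paper does not touch the convolution integral at all: it exploits selfdecomposability and imports the general machinery for class $L$ distributions --- part (1) follows from Theorem 3 of \cite{Sato-Y} after checking that $\lim_{u \downarrow 0} u^{\beta-1}(\alpha^+ - k(u)) = 0$ for the canonical function $k$ of (\ref{fkt-k-self}), while parts (2) and (3) follow from Theorem 1.7 of \cite{Sato-Yamazato} once the quantities $c$, $K(x)$ and $L(x)$ of their equations (1.8)--(1.10) are evaluated, which the paper does via the Exponential Integral $E_1$. Your argument is instead elementary and self-contained: Leibniz differentiation of (\ref{density-bilateral-conv}) isolates the singular term $P_N I_N(x)$, and the substitution $v = xu$ reduces its asymptotics to a Beta integral in case (2) and to a logarithm in case (3). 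This buys transparency and produces the constants directly --- carrying your computation through gives $C_1 = (\lambda^+)^{\alpha^+}(\lambda^-)^{\alpha^-}\,\Gamma(N+1-\alpha^+-\alpha^-)\big/\big(\Gamma(\alpha^+ - N)\Gamma(N+1-\alpha^+)\big)$, which the reflection formula turns into the paper's expression with sines --- at the price of doing all estimates by hand, whereas the paper obtains the precise form of the slowly varying function and the value of $C_2$ from the cited theorems. Two points to fix. First, your claim $p = \alpha^+ - 1 - N \in (-1,0)$ is false in general (take $\alpha^+ = 0.5$, $\alpha^- = 2.3$, so $N = 2$ and $p = -2.5$); fortunately you only ever use $q > -1$ and $p + q \in (-2,-1]$, so nothing breaks, but the statement should be corrected. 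Second, in part (3) you correctly show that the logarithmic terms of $f^{(N)}(x) - f^{(N)}(-x)$ cancel (the reindexing argument for $P_N = (-1)^N \prod_{i=1}^N (\alpha^- - i)$ is right), but you only outline the refinement of the two $O(1)$ remainders to convergent constants; that refinement is genuinely needed to conclude that the limit \emph{exists} (cancellation of the logarithms alone gives only boundedness), and further explicit computation is required to recover the paper's closed form for $C_2$ in terms of cosines.
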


The constants in Theorem \ref{thm-nearzero} are given by
\begin{align*}
\alpha &= N + 1 - \alpha^+ - \alpha^-,
\\ C_1 &= \frac{(\lambda^+)^{\alpha^+}
(\lambda^-)^{\alpha^-} \sin(\alpha^+ \pi)}{\Gamma(\alpha^+ +
\alpha^- - N) \sin((\alpha^+ + \alpha^-)\pi)},
\\ C_2 &= \frac{(\lambda^+)^{\alpha^+}
(\lambda^-)^{\alpha^-}}{2} \Big( (-1)^{N+1} \cos (\alpha^+ \pi) + \cos (\alpha^- \pi) \Big).
\end{align*}

\begin{proof}
For $\alpha^+ \in \mathbb{N}$ we conclude the finiteness of the limit in the first statement from \cite[Thm. 3]{Sato-Y}, since for each $\beta \in (0,1)$ (recall that the function $k$ was defined in (\ref{fkt-k-self}))
\begin{align*}
\lim_{u \downarrow 0} u^{\beta - 1}(\alpha^+ - k(u)) = \lim_{u
\downarrow 0} u^{\beta - 1} \alpha^+ (1 - e^{-\lambda u}) = 0.
\end{align*}
In order to prove the rest of the theorem, we evaluate expressions (1.8)--(1.10) in \cite{Sato-Yamazato}, and then we apply \cite[Thm. 1.7]{Sato-Yamazato}. The constant $c$ in \cite[eqn. (1.8)]{Sato-Yamazato} is in the present situation
\begin{align}\notag
c &= \exp \bigg( (\alpha^+ + \alpha^-) \int_0^1 \frac{e^{-u} -
1}{u} du + (\alpha^+ + \alpha^-) \int_1^{\infty}
\frac{e^{-u}}{u} du
\\ \label{const-c-sato} & \quad \quad \quad - \int_1^{\infty} \frac{\alpha^+
e^{-\lambda^+ u} + \alpha^- e^{-\lambda^- u}}{u} du \bigg).
\end{align}
The first integral appearing in (\ref{const-c-sato}) is by (\ref{der-exp-int}) and the series expansion (\ref{series-ei})
\begin{align*}
\int_0^1 \frac{e^{-u} - 1}{u} du = \lim_{x \downarrow 0} \Big[
- E_1(u) - \ln u \Big]_{u=x}^{u=1} = -E_1(1) - \gamma.
\end{align*}
Using (\ref{der-exp-int}) and the fact $\lim_{x \rightarrow \infty} E_1(x) = 0$, see \cite[Chap. 5]{Abramowitz}, for each constant $\lambda > 0$ the identity
\begin{align*}
\int_1^{\infty} \frac{e^{-\lambda u}}{u} du = \lim_{x
\rightarrow \infty} \Big[ - E_1(\lambda u) \Big]_{u=1}^{u=x} =
E_1(\lambda)
\end{align*}
is valid. Thus, we obtain
\begin{align}\label{repr-const-c}
c = e^{-(\alpha^+ + \alpha^-) \gamma - \alpha^+ E_1(\lambda^+) -
\alpha^- E_1(\lambda^-)}.
\end{align}
The function $K(x)$ in \cite[eqn. (1.9)]{Sato-Yamazato} is in the present situation
\begin{align*}
K(x) = \exp \left( \int_{\abs{x}}^1 \frac{\alpha^+ + \alpha^-
-\alpha^+ e^{-\lambda^+ u} - \alpha^- e^{-\lambda^- u}}{u} du
\right).
\end{align*}
Since by (\ref{der-exp-int})
\begin{align*}
\int_{\abs{x}}^1 \frac{e^{-\lambda u}}{u} du =
E_1(\lambda \abs{x}) - E_1(\lambda) \quad \text{for $\lambda >
0$,}
\end{align*}
we obtain
\begin{align}\label{repr-k}
K(x) = e^{ \alpha^+ E_1(\lambda^+) + \alpha^- E_1(\lambda^-)}
\abs{x}^{-(\alpha^+ + \alpha^-)} e^{-\alpha^+ E_1(\lambda^+
\abs{x}) - \alpha^- E_1(\lambda^- \abs{x})}.
\end{align}
Using the series expansion (\ref{series-ei}), we get
\begin{align}\label{repr-func-k}
\lim_{x \rightarrow 0}K(x) = (\lambda^+)^{\alpha^+}
(\lambda^-)^{\alpha^-} e^{(\alpha^+ + \alpha^-) \gamma + \alpha^+
E_1(\lambda^+) + \alpha^- E_1(\lambda^-)},
\end{align}
showing that for the slowly varying function
\begin{align*}
L(x) = \int_{|x|}^1 \frac{K(u)}{u} du
\end{align*}
in \cite[eqn. (1.10)]{Sato-Yamazato} it holds
\begin{align}\label{lim-slow-var}
\lim_{x \rightarrow 0} L(x) = \infty.
\end{align}
Applying \cite[Thm. 1.7]{Sato-Yamazato} and relations (\ref{repr-const-c})--(\ref{lim-slow-var}) completes the proof.
\end{proof}

The asymptotic behaviour of the Whittaker function for large values of $|z|$ is, according to \cite[p. 1016]{Gradstein},
\begin{align*}
W_{\lambda, \mu}(z) \sim e^{-\frac{z}{2}} z^{\lambda} H(z)
\end{align*}
with $H$ denoting the function
\begin{align*}
H(z) = 1 + \sum_{k=1}^{\infty} \frac{\left[ \mu^2 - (\lambda - \frac{1}{2})^2 \right]
\left[ \mu^2 - (\lambda - \frac{3}{2})^2 \right] \cdots
\left[ \mu^2 - (\lambda - k + \frac{1}{2})^2 \right]}{k! z^k}.
\end{align*}
Obviously, it holds $H(z) \sim 1$ for $z \rightarrow \infty$.
Taking into account (\ref{symm-rel}) and (\ref{repr-dens-whittaker}), for $x \rightarrow \pm \infty$ the density has the asymptotic behaviour
\begin{align*}
f(x) &\sim C_3 x^{\alpha^+ - 1} e^{-\lambda^+ x} \quad \text{as $x
\rightarrow \infty$,}
\\ f(x) &\sim C_4 |x|^{\alpha^- - 1} e^{-\lambda^- |x|} \quad \text{as $x \rightarrow -\infty$,}
\end{align*}
where the constants $C_3, C_4 > 0$ are given by
\begin{align*}
C_3 = \frac{(\lambda^+)^{\alpha^+} (\lambda^-)^{\alpha^-}}{(\lambda^+
+ \lambda^-)^{\alpha^-} \Gamma(\alpha^+)} \quad \text{and} \quad C_4 =
\frac{(\lambda^+)^{\alpha^+} (\lambda^-)^{\alpha^-}}{(\lambda^+ +
\lambda^-)^{\alpha^+} \Gamma(\alpha^-)}.
\end{align*}
As a consequence, we obtain for the logarithmic density function $\ln f$
\begin{align*}
\lim_{x \rightarrow \infty} \frac{\ln f(x)}{x} = - \lambda^+ \quad \text{and} \quad
\lim_{x \rightarrow -\infty} \frac{\ln f(x)}{x} = \lambda^-.
\end{align*}
In particular, the density of a bilateral Gamma distribution is \textit{semiheavy tailed}.

\section{Shapes of the densities}\label{sec-shapes}

\begin{figure}[t]
   \centering
   \includegraphics[height=30ex,width=30ex]{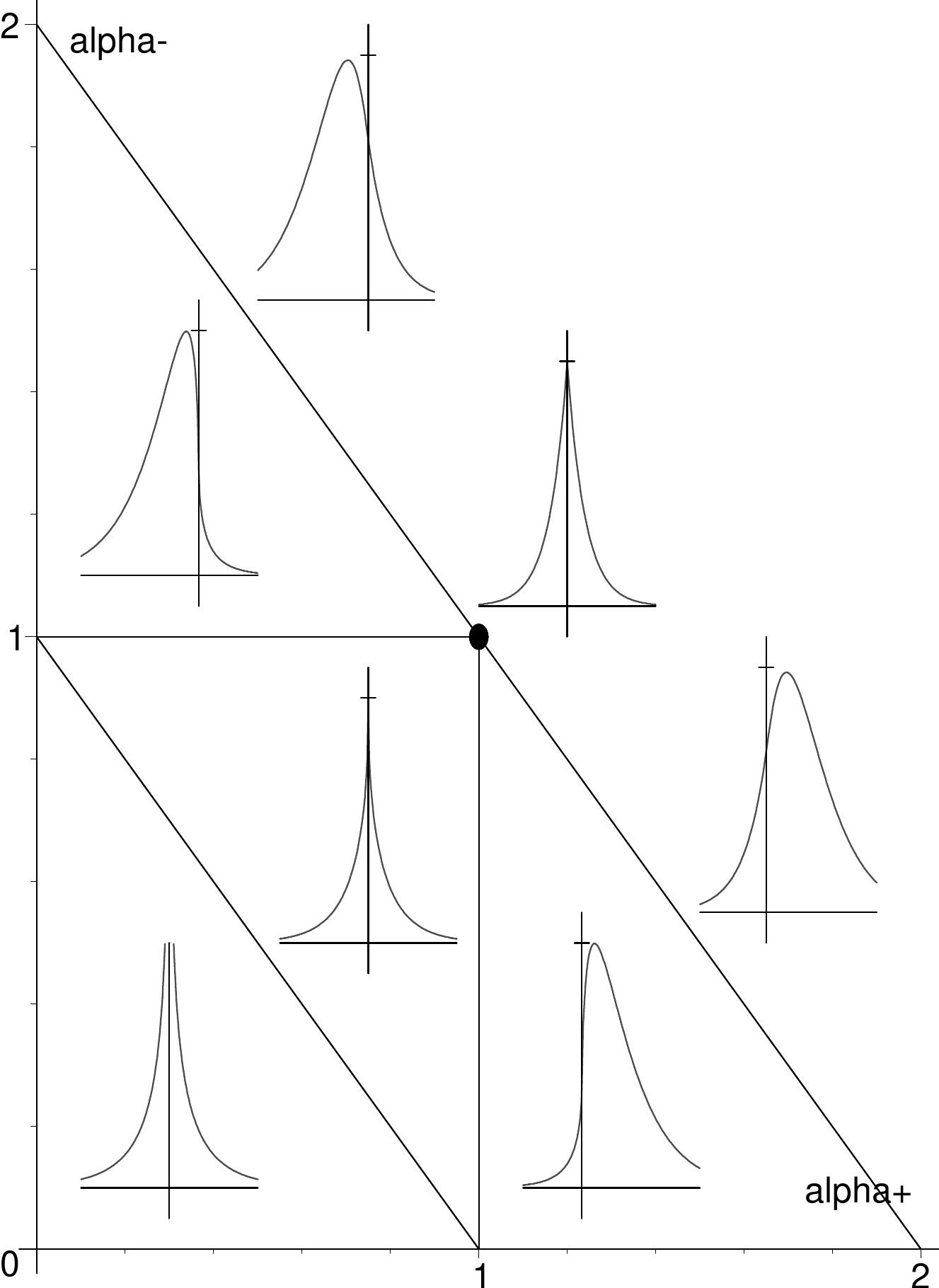}
   \caption{The shapes of $f$ for $\lambda^+ = \lambda^-$. Different choices of $\lambda^+$ and $\lambda^-$ may shift the mode and change the skewness.}
   \label{fig-shapes}
\end{figure}

The shapes of bilateral Gamma distributions can have remarkable differences. Using the results of the previous sections, we characterize typical shapes of their densities.

If $\alpha^+ + \alpha^- \leq 1$, then $f$ is not continuous at zero by Theorem \ref{thm-smoothness}. According to Theorem \ref{thm-nearzero}, it holds
\begin{align}\label{close-to-zero-obs}
\lim_{x \uparrow 0} f(x) = \infty \quad \text{and} \quad \lim_{x \downarrow 0} f(x) = \infty.
\end{align}
We infer that the density has a \textit{pole} at the mode $x_0 = 0$. Notice that in the special case $\alpha^+ + \alpha^- = 1$ the difference $f(x) - f(-x)$ tends to a finite value as $x \downarrow 0$ by the third statement of Theorem \ref{thm-nearzero}. We observe that densities with (\ref{close-to-zero-obs}) are appropriate for fitting data sets with many observations accumulating closely around zero.

If $1 < \alpha^+ + \alpha^- \leq 2$, then, by Theorem \ref{thm-smoothness}, $f$ is continuous on $\mathbb{R}$, but its derivative is not continuous at zero. Let us have a closer look at the behaviour of $f'$ near zero in this case.

\begin{itemize}
\item If $\alpha^+, \alpha^- \in (0,1)$ and $\alpha^+ + \alpha^- \in (1,2)$, it holds, according to Theorem \ref{thm-nearzero},
\begin{align*}
\lim_{x \uparrow 0} f'(x) = \infty \quad \text{and} \quad \lim_{x \downarrow 0} f'(x) = -\infty.
\end{align*}
Hence, we have a \textit{steep} mode of the density at zero with exploding slope from the left and from the right. This shape is also suitable for sets of data with many observations being close to zero.

\item If $\alpha^- < 1 < \alpha^+$, applying Theorem \ref{thm-nearzero} yields
\begin{align*}
\lim_{x \uparrow 0} f'(x) = \infty \quad \text{and} \quad \lim_{x \downarrow 0} f'(x) = \infty.
\end{align*}
Hence, the mode $x_0$ is located at the positive half axis and $f$ has infinite slope at zero. We remark that in the special case $\alpha^+ + \alpha^- = 2$ the difference $f'(x) - f'(-x)$ tends to a finite value as $x \downarrow 0$ by the third statement of Theorem \ref{thm-nearzero}.

\item If $\alpha^+, \alpha^- = 1$, we have a two-sided exponential distribution, which is in particular Variance Gamma, as we have shown at the end of Section \ref{sec-dens}. We obtain
\begin{align*}
\lim_{x \uparrow 0} f'(x) = C^- \quad \text{and} \quad \lim_{x \downarrow 0} f'(x) = -C^+
\end{align*}
with finite constants $C^-, C^+ \in (0,\infty)$. Consequently, we have a \textit{peak} mode of the density at zero with finite slope from both sides.

\end{itemize}

If $\alpha^+ + \alpha^- > 2$, then the density is smooth, that is at least of type $C^1(\mathbb{R})$ by Theorem \ref{thm-smoothness}.
Choosing $\lambda^+ \gg \alpha^+$ and $\lambda^- \gg \alpha^-$, the mode $x_0$ is necessarily close to zero by Proposition \ref{prop-unimodal}. Such shapes are in particular applicable for observations of financial data. We refer to \cite[Sec. 9]{Kuechler-Tappe}, where for a specific data set of stock returns the maximum likelihood estimation $\alpha^+ = 1.55$, $\lambda^+ = 133.96$, $\alpha^- = 0.94$, $\lambda^- = 88.92$ provided a good fit.

Summarizing the preceding results, Figure \ref{fig-shapes} provides an overview about typical shapes of bilateral Gamma densities.


\begin{thebibliography}{20}


\bibitem{Abramowitz}
    Abramowitz, M. and Stegun, I. A. (1972)
    \emph {Handbook of mathematical functions.}
    Dover Publications, New York.

\bibitem{Barndorff-Nielsen}
    Barndorff-Nielsen, O. E. (1977)
    \emph {Exponentially decreasing distributions for the logarithm of particle size.}
    Proceedings of the Royal Society London Series A, Vol. 353, 401-419.

\bibitem{CGMY}
    Carr, P., Geman, H., Madan, D. and Yor, M. (2002)
    \emph {The fine structure of asset returns: an empirical investigation.}
    Journal of Business {\bf 75}(2), 305-332.

\bibitem{Eberlein-Keller}
    Eberlein, E. and Keller, U. (1995)
    \emph {Hyperbolic distributions in finance.}
    Bernoulli {\bf 1}, 281-299.

\bibitem{Gradstein}
    Gradshteyn, I. S. and Ryzhik, I. M. (2000)
    \emph {Table of integrals, series and products.}
    Academic Press, San Diego.

\bibitem{Kuechler-Tappe}
    K\"uchler, U. and Tappe, S. (2008)
    \emph {Bilateral Gamma distributions and processes in financial mathematics.}
    Stochastic Processes and their Applications {\bf 118}(2), 261-283.

\bibitem{Madan}
    Madan, D. B. (2001)
    \emph {Purely discontinuous asset pricing processes.}
    In: Jouini, E., Cvitani{\v c}, J. and Musiela, M.
    (Eds.), pp. 105-153.
    \emph {Option Pricing, Interest Rates and Risk Management.}
    Cambridge University Press, Cambridge.

\bibitem{Sato}
    Sato, K. (1999)
    \emph {L\'evy processes and infinitely divisible distributions.}
    Cambridge studies in advanced mathematics, Cambridge.

\bibitem{Sato-Yamazato}
    Sato, K. and Yamazato, M. (1978)
    \emph {On distribution functions of class $L$.}
    Zeit. Wahrsch. Verw. Gebiete {\bf 43}, 273-308.

\bibitem{Sato-Y}
    Sato, K. and Yamazato, M. (1981)
    \emph {On higher derivatives of distribution functions of class $L$.}
    J. Math Kyoto Univ. {\bf 21}, 575-591.



\end{thebibliography}
\end{document}